\documentclass[a4paper,10pt]{amsart}
\usepackage[english]{babel} 
\usepackage{amsmath,amssymb,amsthm,amsfonts} 
\usepackage{latexsym}
\usepackage[all]{xy} 
\usepackage{color}

\newcommand{\F}{\mathbb F}

\newcommand{\gal}{\mathrm{Gal}}
\renewcommand{\epsilon}{\varepsilon}

\newcommand{\st}{\mathrm{st}}

\newcommand{\rt}{\mathrm{R}_t}
\newcommand{\cl}{\mathrm{Cl}}

\newcommand{\G}{\mathcal G}
\newcommand{\oo}{\mathcal O}
\newcommand{\W}{\mathcal W}

\theoremstyle{plain}
\newtheorem{theorem}{Theorem}[section]
\newtheorem{proposition}[theorem]{Proposition}

\theoremstyle{definition}
\newtheorem{definition}[theorem]{Definition}

\title[On a question on $A$-groups]{Answer to a question on $A$-groups, arisen from the study of Steinitz classes}
\author{Alessandro Cobbe and Maurizio Monge}
\subjclass[2010]{20F16, Secondary: 11R33}
\keywords{A-groups, Steinitz classes, solvable groups}

\begin{document}
\maketitle

\begin{section}*{Abstract}
  In this short note we answer to a question of group theory from \cite{Cobbe1}. In that paper the
  author describes the set of realizable Steinitz classes for so-called $A'$-groups of odd order,
  obtained iterating some direct and semidirect products. It is clear from the definition that
  $A'$-groups are solvable $A$-groups, but the author left as an open question whether the converse
  is true. In this note we prove the converse when only two prime numbers divide the order of the
  group, but we show it to be false in general, producing a family of counterexamples which are
  metabelian and with exactly three primes dividing the order. Steinitz classes which are realizable
  for such groups in the family are computed and verified to form a group.
\end{section}

\begin{section}{Introduction}
  Let $K/k$ be an extension of number fields with rings of integers $\oo_K$ and $\oo_k$
  respectively. Then there exists an ideal $I$ of $\oo_k$ such that
\[\oo_K\cong \oo_k^{[K:k]-1}\oplus I\]
as $\oo_k$-modules and the ideal $I$ is determined up to principal ideals. Its class in the ideal
class group $\cl(\oo_k)$ of $\oo_k$ is called the Steinitz class of the extension and is denoted by
$\st(K/k)$. For a fixed number field $k$ and a finite group $G$ one can consider the set of classes
which arise as Steinitz classes of tame Galois extensions with Galois group $G$, i.e. the set
\[\rt(k,G)=\{x\in\cl(k):\ \exists K/k\text{ tame Galois, }\gal(K/k)\cong G, \st(K/k)=x\}.\]
A description of $\rt(k,G)$ is not known in general, but there are a lot of results for some
particular groups. These results lead to the conjecture that $\rt(k,G)$ is always a subgroup of the
ideal class group, which however has not been proved in general.  In \cite{Cobbe1} the author
defines $A'$-groups in the following way and proves the above conjecture for all $A'$-groups of odd
order.
\begin{definition}\label{A'groups}
We define $A'$-groups inductively:
\begin{enumerate}
\item
\label{rule1}
 Finite abelian groups are $A'$-groups.
\item
\label{rule2}
 If $\G$ is an $A'$-group and $H$ is finite abelian of order prime to that of $\G$, then
  $H\rtimes_\mu \G$ is an $A'$-group, for any action $\mu$ of $\G$ on $H$.
\item 
\label{rule3}
If $\G_1$ and $\G_2$ are $A'$-groups, then $\G_1\times \G_2$ is an $A'$-group.
\end{enumerate}
\end{definition}
Clearly (see \cite[Proposition 1.2]{Cobbe1}) every $A'$-group is a solvable $A$-group, while it
was asked whether the converse is true. In this short note we find a family of counterexamples for this. In the
last section we show how the techniques from \cite{Cobbe1} can be applied also to the calculation of
the realizable Steinitz classes for these groups, showing in particular that $\rt(k,G)$ is still a
subgroup of the ideal class group, confirming the general conjecture.
\end{section}

\begin{section}{Solvable $A$-groups which are not $A'$-groups}

We start showing a positive result when only two primes divide the order. See
  \cite{taunt1949groups,huppert1967endliche} for general results about the $A$-groups.

\begin{proposition}
\label{prop1}
  An $A$-group $G$ having order divisible by at most two different primes is an $A'$-group.
\end{proposition}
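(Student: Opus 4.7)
The plan is to proceed by the number of primes dividing $|G|$. If $|G|$ is divisible by at most one prime, then $G$ equals its Sylow subgroup and is abelian, so rule (\ref{rule1}) applies directly. So assume $|G|=p^aq^b$ with $a,b>0$; $G$ is then solvable by Burnside's theorem. I would invoke Taunt's theorem on solvable $A$-groups (cf.\ \cite{taunt1949groups,huppert1967endliche}), which gives that the Fitting subgroup $F=F(G)$ has a complement, that is, $G=F\rtimes H$ with $H\cong G/F$. Since $F$ is nilpotent and an $A$-group, it is abelian, and splits as $F=O_p(G)\times O_q(G)=:U\times V$.

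The central step is to prove that $H$ itself is abelian. Solvability together with $F$ abelian yields $C_G(F)=F$, so conjugation embeds $H$ into $\aut(F)=\aut(U)\times\aut(V)$. Writing $H_p,H_q$ for the Sylow subgroups of $H$ (both abelian, since $H$ is an $A$-group), one observes that $UH_p$ is a $p$-subgroup of $G$ of order $p^a$, hence a full Sylow $p$-subgroup and therefore abelian, so $H_p$ centralizes $U$; symmetrically $H_q$ centralizes $V$. The image of $H$ in $\aut(U)$ therefore has order dividing $|H|/|H_p|=q^{b-b'}$ and is a $q$-group, while the image in $\aut(V)$ is a $p$-group. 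Hence $H$ embeds into the direct product of a $p$-group and a $q$-group, is nilpotent, and (with abelian Sylow subgroups) equals $H_p\times H_q$, which is abelian.

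With $H$ abelian, I would choose Sylow subgroups $P\supseteq H_p$ and $Q\supseteq H_q$ of $G$, so that $P=U\times H_p$ and $Q=V\times H_q$ by abelianness and an order count. Setting $A_p=UH_q$ and $A_q=VH_p$, these are subgroups of $G$ (since $U,V$ are normal), and the four generator-commutators $[U,V]$, $[U,H_p]$, $[H_q,V]$, $[H_q,H_p]$ all vanish, as each pair lies in one of the abelian subgroups $F$, $P$, $Q$, $H$; thus $A_p$ and $A_q$ commute elementwise. Uniqueness of the decomposition in $F\rtimes H$ gives $A_p\cap A_q=1$ (any equality $uh_q=vh_p$ forces $u=v\in U\cap V=1$ and $h_q=h_p\in H_p\cap H_q=1$), and an order count yields $G=A_p\times A_q$. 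Each factor $A_p=U\rtimes H_q$ is a semidirect product of an abelian group by an abelian (hence $A'$) group of coprime order, so rule (\ref{rule2}) makes $A_p$ an $A'$-group; similarly $A_q$ is $A'$, and then rule (\ref{rule3}) yields that $G$ is $A'$. The main obstacle will be the abelianness of $H$, which crucially combines the $A$-group hypothesis (to obtain abelian Sylow subgroups of $G$) with solvability (to obtain $C_G(F)=F$).
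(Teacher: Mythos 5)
Your argument breaks down at its very first structural step: a solvable $A$-group need not split over its Fitting subgroup, so the decomposition $G=F\rtimes H$ on which everything else rests is not available. The smallest counterexample is the dicyclic group of order $12$,
\[
G=\langle a,b\mid a^{3}=b^{4}=1,\ bab^{-1}=a^{-1}\rangle\cong C_{3}\rtimes C_{4},
\]
whose Sylow subgroups are $C_{3}$ and $C_{4}$, so it is an $A$-group of order $2^{2}\cdot 3$. Here $F(G)=\langle a\rangle\times\langle b^{2}\rangle\cong C_{6}$ has index $2$, so a complement would have to be generated by an involution outside $F(G)$; but the unique involution of $G$ is the central element $b^{2}$, which lies in $F(G)$. (Equivalently, by Gasch\"utz's criterion, $O_{2}(G)=\langle b^{2}\rangle$ is not complemented because it is not complemented in the Sylow subgroup $C_{4}$.) The splitting theorem one does have for solvable $A$-groups is over the \emph{derived} subgroup, but substituting $G'$ for $F$ does not repair the later steps as written: $C_{G}(G')$ is in general strictly larger than $G'$, so you lose the embedding of the complement $H$ into $\aut(U)\times\aut(V)$, which is exactly what you use to prove $H$ abelian. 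Everything downstream of the splitting (the observation that $UH_{p}$ is an abelian Sylow $p$-subgroup so $H_{p}$ centralizes $U$, the commuting factors $A_{p}=UH_{q}$ and $A_{q}=VH_{p}$, and the appeal to rules (\ref{rule2}) and (\ref{rule3})) is correct conditionally on that first step, but the foundation is missing.

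For comparison, the paper reaches the same target decomposition $G=K_{p}\times K_{q}$, with each factor a coprime semidirect product of two abelian groups, without ever complementing $F(G)$: it first uses Hall--Higman to see that $G'$ is abelian, passes to $\tilde G=G/S_{q}$ where $S_{q}$ is the $q$-Sylow of $G'$, notes that the $p$-Sylow $\tilde P$ of $\tilde G$ is abelian and normal, and applies the theorem on invariant complements under coprime action to split $\tilde G'$ off inside $\tilde P$; the preimage in $G$ of the invariant complement is the normal factor $K_{p}$. Note that in the example above this construction yields $K_{2}=G$ and $K_{3}=1$, which is why that group is still an $A'$-group even though your proposed decomposition does not exist. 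To salvage your approach you would need some such local device for manufacturing the two normal factors, rather than a global complement to the Fitting subgroup.
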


\begin{proof}[Proof]
  Indeed, let $G$ be an $A$-group with order divisible only by the primes $p$ and $q$; it is always
  solvable by Burnside Theorem. By Hall-Higman Theorem \cite[Satz VI.14.16]{huppert1967endliche} a
  solvable $A$-group has derived length at most equal to the number of distinct prime divisors of
  the order, so in our case $G$ has derived length at most $2$ and $G'$ is abelian. If the derived
  length is $1$ then $G$ is abelian, so we are reduced to consider the case of derived length
  exactly $2$.

  We will consider the unique subgroup $K_p$ such that $K_pG'/G'$ is the $p$-Sylow of
  $G/G'$ and $K_p\cap{}G'$ is the $q$-Sylow of $G'$ and we will show it to be normal in $G$. Further by
  Schur-Zassenhaus Theorem it is an
  $A'$-group, being the semidirect product of an abelian $q$-group by an abelian $p$-group. Constructing analogously $K_q$, with $p$ an $q$ flipped, we have that
  $K_p\cap{}K_q=1$, while $K_pK_q$ is all of $G$, so $K_p$ and $K_q$ are direct factors of $G$, since they are normal. Therefore $G$ is isomorphic to $K_p\times{}K_q$ and consequently $G$
  is an $A'$-group by rule \ref{rule3}.

  To construct $K_p$ let's quotient out the $q$-Sylow $S_q$ of $G'$, obtaining the group
  $\tilde{G}=G/S_q$. Its $p$-Sylow, $\tilde{P}$ say, is clearly normal being the inverse image of
  the $p$-Sylow of $G/G'$, which is a $p$-group since we killed all the $q$-part of $G'$. So we have
  the exact sequence
  \[
    1 \rightarrow \tilde{P} \rightarrow \tilde{G} \rightarrow \tilde{G}/\tilde{P} \rightarrow 1,
  \]
  and furthermore $\tilde{G}'$ is equal to $G'/S_q$ being $S_q\subseteq{}G'$, and is contained in
  $\tilde{P}$ being $\tilde{G}/\tilde{P}$ abelian.

  Now $\tilde{G}'$ has a complementary factor in $\tilde{P}$ which is invariant under the action by
  conjugation of the $q$-group $\tilde{G}/\tilde{P}$ by \cite[Theorem 2.3,
    Chap. 5]{gorenstein1980finite}, so let's assume $\tilde{P}=\tilde{G}'\times{}F_p$ say. Clearly
  $F_p$ is a $p$-group which is normal in $\tilde{G}$, and $F_p\tilde{G}'/\tilde{G}'$ is the
  $p$-Sylow of $\tilde{G}/\tilde{G}'={}G/G'$. So if we put $K_p$ to be the preimage of $F_p$ under
  the projection $G\rightarrow{}\tilde{G}$ we have that $K_p$ is normal in $G$, $K_pG'/G'$ is
  the $p$-Sylow of $G/G'$, and $K_p\cap{}G'$ is the $q$-Sylow $S_q$ of $G'$, being the preimage of
  $F_p\cap{}\tilde{G}'=1$.
\end{proof}

For any triple $p,q,r$ of distinct primes we construct now a counterexample which is a metabelian
group.  For any integer $n$ let $C_n$ be the cyclic group on $n$ elements.

Let $a,b$ be integers such that
\[
    qr\mid{}p^a-1,\qquad pr\mid{}q^b-1,
\]
or equivalently such that $\mathrm{ord}_{qr}^\times(p)\mid{}a$ and
$\mathrm{ord}_{pr}^\times(q)\mid{}b$. Let $\F_{p^a}$ and $\F_{q^b}$ respectively be the fields with
$p^a$ and $q^b$ elements, then the multiplicative groups $\F_{p^a}^\times$ and $\F_{q^b}^\times$ act
naturally as automorphisms on the additive groups $\F_{p^a}^+$ and $\F_{q^b}^+$. If
$\phi:C_q\hookrightarrow{}\F_{p^a}^\times$ and $\psi:C_p\hookrightarrow{}\F_{q^b}^\times$ are
embeddings we can consider the semidirect products
\[
  H_1=\F_{p^a}^+\rtimes_\phi{}C_q, \qquad H_2=\F_{q^b}^+\rtimes_\psi{}C_p.
\]
Let's also consider embeddings $\rho_1:C_r\hookrightarrow{}\F_{p^a}^\times$ and
$\rho_2:C_r\hookrightarrow{}\F_{q^b}^\times$, since $\F_{p^a}^\times$ and $\F_{q^b}^\times$ are
abelian groups the actions induced by $C_r$ on $\F_{p^a}^+$ and $\F_{q^b}^+$ commute with those of
$C_q$ and $C_p$, so $\rho_1$, $\rho_2$ induce an action of $C_r$ on $H_1$ and $H_2$ which is trivial
on $C_p$ and $C_q$.

We define
\[
   G = (H_1\times{}H_2) \rtimes_{\rho_1,\rho_2} C_r,
\]
where $C_r$ acts on $H_i$ via $\rho_i$, for $i=1,2$.

\begin{proposition}
$G$ is a metabelian $A$-group which is not an $A'$-group.     
\end{proposition}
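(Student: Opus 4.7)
The plan is to verify three claims in order: $G$ is metabelian, $G$ is an $A$-group, and $G$ is not an $A'$-group. For metabelian, I will take $N = \F_{p^a}^+ \times \F_{q^b}^+$ and observe that it is abelian and normal in $G$ (each factor is preserved by all the actions in the construction), with quotient $G/N \cong (C_q \times C_p) \rtimes C_r = C_q \times C_p \times C_r$, abelian because $C_r$ acts trivially on $C_q$ and $C_p$ by construction. For the $A$-group property, the Sylow subgroups are $\F_{p^a}^+ \times C_p$ in degree $p$, $\F_{q^b}^+ \times C_q$ in degree $q$, and $C_r$ in degree $r$; all three are abelian since elements of $H_1$ commute with those of $H_2$ in the direct product $H_1 \times H_2$.

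The bulk of the work is to show $G$ is not an $A'$-group. Supposing otherwise, one of the three rules of Definition \ref{A'groups} was applied last. Rule \ref{rule1} fails as $G$ is not abelian. For rule \ref{rule2}, we would need a nontrivial abelian normal Hall subgroup $H$. I will enumerate the normal Hall subgroups: the $p$-, $q$-, and $r$-Sylows and the $\{p,r\}$- and $\{q,r\}$-Hall subgroups all fail to be normal (for instance, $C_p \subset H_2$ is moved by conjugation with $\F_{q^b}^+$, using that $\psi$ is a nontrivial embedding, and similarly for the other cases using nontriviality of $\phi, \rho_1, \rho_2$). The only nontrivial proper normal Hall subgroup is $H_1 \times H_2$, the kernel of the projection $G \twoheadrightarrow C_r$, but $H_1$ is nonabelian, so rule \ref{rule2} also fails.

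The main obstacle is ruling out rule \ref{rule3}, a nontrivial decomposition $G = \G_1 \times \G_2$. Since $r$ divides $|G|$ with multiplicity one, we may assume $r \mid |\G_1|$ and $r \nmid |\G_2|$. Then $\G_1$ contains an $r$-Sylow $R$ and, being normal in $G$, contains the normal closure $\overline{R}$, so $\G_2 \subseteq C_G(\overline{R})$. Computing the conjugate $v c_r v^{-1} = (1 - \rho_1(c_r)) v \cdot c_r$ for $v \in \F_{p^a}^+$ and $c_r \in C_r$, one sees that $(1 - \rho_1(c_r)) \F_{p^a}^+ = \F_{p^a}^+ \subseteq \overline{R}$, and symmetrically $\F_{q^b}^+ \subseteq \overline{R}$. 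It then suffices to show that $C_G(\F_{p^a}^+) \cap C_G(\F_{q^b}^+) \cap C_G(C_r)$ is trivial. Using injectivity of $\phi, \psi, \rho_1, \rho_2$ together with the coprimality of $p, q, r$, a direct computation identifies these centralizers as $\F_{p^a}^+ \times H_2$, $H_1 \times \F_{q^b}^+$, and $C_q \times C_p \times C_r$ respectively, whose common intersection is $\{1\}$. This forces $\G_2 = 1$, a contradiction, completing the proof.
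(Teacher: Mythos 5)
Your proof is correct and follows essentially the same route as the paper's: the same normal abelian subgroup $\F_{p^a}^+\times\F_{q^b}^+$ for metabelianity, non-normality of the Sylow (and hence of any abelian Hall) subgroups traced to the injectivity of $\phi,\psi,\rho_1,\rho_2$ to exclude rule~(2), and a centralizer computation to exclude a direct-product decomposition. The only notable differences are that you verify the $A$-group property explicitly (the paper leaves it implicit) and that in the rule~(3) step you pass through the normal closure of the $r$-Sylow and intersect three centralizers, whereas the paper centralizes $C_r$ alone and then eliminates the primes $p$ and $q$ from $|G_2|$ by a characteristic-subgroup argument; both variants are equally short and correct.
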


\begin{proof}[Proof]
  Indeed, $G$ is metabelian because $\F_{p^a}^+\times\F_{q^b}^+$ is a normal abelian subgroup with
  abelian quotient, isomorphic to $C_q\times{}C_p\times{}C_r$.

  To show that $G$ cannot be obtained applying rule \ref{rule2} in the inductive definition of the
  $A'$-groups we prove that no Sylow subgroup is normal. Since $(r,p)=1$, a $p$-Sylow $P$ is
  contained in $H_1\times{}H_2$, and if normal then $H_2\cap{}P$ would be normal in $H_2$ too, but
  $C_p$ in $\F_{q^b}^+\rtimes{}C_p$ is clearly not normal or it would be complemented by the normal
  subgroup $\F_{q^b}^+$ and $H_2$ would be abelian, which is not the case. The same holds for the
  $q$-Sylow of $H_1$, and similarly $C_r$ cannot be normal unless $G=(H_1\times{}H_2)\times{}C_r$
  and all elements of order $r$ would be contained in the center of $G$, which is not the case.

  To conclude we just need to show that $G$ is not a direct product, so it also cannot be obtained
  applying rule \ref{rule3}. Suppose $G=G_1\times{}G_2$, then exactly one of $G_1,G_2$ has order
  divisible by $r$, so assume $r\mid{} |G_1|$, and we have that $G_1$ contains all $r$-Sylow
  subgroups, so in particular $C_r\subset{}G_1$. Then $G_2$ is contained in the centralizer of
  $C_r$, that considering the definition of $G$ we can see to be equal to
  $C_p\times{}C_q\times{}C_r$. But $r\nmid |G_2|$, and if $p\mid{}G_2$ we would have
  $C_p\subset{}G_2$ and $C_p$ would be the $p$-Sylow, and hence a characteristic subgroup, of $G_2$,
  and consequently normal in $G$, which is absurd. Since we can prove similarly that $q\nmid |G_2|$
  we obtain $G_2=1$.
\end{proof}

We remark that some of the smallest counterexamples are those obtained putting the $(p,q,r;a,b)$
equal to $(5,2,3;2,4)$ and $(13,3,2;1,3)$. The groups produced have orders respectively $12000$ and
$27378$, and are already a bit too far away to be found in a brute-force computer search, as was
performed by the author of \cite{Cobbe1}.

\end{section}

\begin{section}{Realizable Steinitz classes}
In \cite{CaputoCobbe}, for all number fields $k$ and all finite groups $G$, a subgroup $\W(k,G)$ of the ideal class group $\cl(k)$ of $k$ was defined. In \cite[Theorem 2.10]{CaputoCobbe} it has been shown that
\[\rt(k,G)\subseteq\W(k,G)\]
and that there is an equality whenever $G$ is an $A'$-group of odd order (\cite[Theorem 4.3]{CaputoCobbe}). So it is a natural question to investigate whether the equality holds for the solvable $A$-groups constructed above, which are not $A'$-groups, when $p,q,r$
are all odd prime numbers.

\begin{proposition}
Let $p,q,r$ be odd prime numbers, let $G$ be defined as in the previous section and let $k$ be a number field. Then
\[\rt(k,G)=\W(k,G).\]
\end{proposition}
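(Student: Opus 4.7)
The containment $\rt(k,G)\subseteq\W(k,G)$ is known by \cite[Theorem 2.10]{CaputoCobbe}, so only the reverse inclusion remains: every class in $\W(k,G)$ must be realized as the Steinitz class of some tame Galois $G$-extension of $k$. The strategy is to exhibit $G$ as a fibered product of two $A'$-groups over the common quotient $C_r$, so that the realizability result for $A'$-groups of odd order, \cite[Theorem 4.3]{CaputoCobbe}, can be combined on the two factors.

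Set $G_1:=\F_{p^a}^+\rtimes(C_q\times C_r)$, where $C_q$ acts via $\phi$ and $C_r$ via $\rho_1$, and $G_2:=\F_{q^b}^+\rtimes(C_p\times C_r)$, where $C_p$ acts via $\psi$ and $C_r$ via $\rho_2$. Since $\F_{p^a}^\times$ is abelian, the actions of $C_q$ and $C_r$ on $\F_{p^a}^+$ commute, so $G_1\cong H_1\rtimes_{\rho_1}C_r$; and $G_1$ is an $A'$-group by rule \ref{rule2} applied to the abelian normal subgroup $\F_{p^a}^+$ (of order $p^a$) and the abelian $A'$-group $C_q\times C_r$ (of order $qr$, coprime to $p^a$). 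The same argument gives $G_2\cong H_2\rtimes_{\rho_2}C_r$ and shows it is an $A'$-group. One checks directly that the natural map $G\to G_1\times_{C_r}G_2$ sending $(h_1,h_2,c)\mapsto ((h_1,c),(h_2,c))$ is a group isomorphism: it is obviously a homomorphism into the fibered product, an order count gives $|G|=|G_1\times_{C_r}G_2|=p^{a+1}q^{b+1}r$, and the map is injective because the kernels of the two projections $G\to G_i$ intersect trivially in $G$.

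Given $x\in\W(k,G)$, the plan is to decompose $x$ via the projections $G\to G_i$ into a pair $(x_1,x_2)\in\W(k,G_1)\times\W(k,G_2)$ with compatible image in $\W(k,C_r)$; this is an algebraic computation on the level of the $\W$-group using its definition in \cite{CaputoCobbe}. By \cite[Theorem 4.3]{CaputoCobbe}, one then realizes tame Galois $G_i$-extensions $L_i/k$ with Steinitz class $x_i$, arranged so that they contain a common $C_r$-subfield $L/k$ and are linearly disjoint over $L$. Linear disjointness can be ensured by varying the auxiliary ramified primes appearing in the constructions of \cite{Cobbe1}, which have positive Chebotarev density. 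The compositum $K=L_1L_2$ is then tame Galois over $k$ with group $G_1\times_{C_r}G_2\cong G$, and its Steinitz class is recovered from those of $L_1$ and $L_2$ (and of $L/k$) by a compositum formula of the same type as the one implicit in the treatment of rule \ref{rule3} for $A'$-groups.

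The main obstacle lies in the first step: unpacking the structure of $\W(k,G)$ enough to split any given element into a compatible pair $(x_1,x_2)$, since the fibered-product nature of $G$ imposes a non-trivial matching condition over $C_r$ beyond what arises for honest direct products. The oddness of $p$, $q$, $r$ and the mutual coprimality of the orders inside each $G_i$, together with the inductive machinery of \cite[Theorem 4.3]{CaputoCobbe}, should nevertheless allow one to transport the realizability argument from the two $A'$-factors to $G$ without essentially new ingredients, and to check along the way that the resulting subset $\rt(k,G)$ is indeed all of $\W(k,G)$ and in particular a subgroup of $\cl(k)$.
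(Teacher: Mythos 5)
Your reduction of $G$ to the fibered product $G_1\times_{C_r}G_2$ of two $A'$-groups is correct as group theory, but the arithmetic argument built on it has genuine gaps at each step, and each gap is exactly the kind of statement the cited results do not supply. First, \cite[Theorem 4.3]{CaputoCobbe} realizes every class of $\W(k,G_i)$ by \emph{some} tame $G_i$-extension; it does not let you prescribe the $C_r$-subfield $L/k$ in advance, nor produce a $G_1$-extension and a $G_2$-extension containing the \emph{same} $L$ with compatible identifications of $\gal(L/k)$ with $C_r$. Without that refinement the compositum need not have Galois group $G_1\times_{C_r}G_2$. Second, since $L_1$ and $L_2$ must share $L$, they are not arithmetically disjoint over $k$, so the Steinitz class of $K=L_1L_2$ is not given by the product formula underlying rule \ref{rule3}; you would need a conductor--discriminant computation adapted to the fibered product, which you do not carry out. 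Third, the ``matching condition over $C_r$'' needed to split an arbitrary $x\in\W(k,G)$ into a compatible pair $(x_1,x_2)$ is the heart of the matter, and you explicitly defer it. As it stands this is a plausible programme, not a proof.

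The paper avoids all of this by a different decomposition: $G=H\rtimes\G$ with $H=\F_{p^a}^+\times\F_{q^b}^+$ abelian normal and $\G=C_p\times C_q\times C_r$ an \emph{abelian} complement (the action of $C_r$ on $C_p$ and $C_q$ is trivial by construction). Then \cite[Theorem 3.5]{CaputoCobbe}, combined with the known equality for the abelian group $\G$, gives a lower bound for $\rt(k,G)$ as an explicit product of subgroups $W(k,E_{k,G,\tau})$ raised to suitable powers, and the whole proof reduces to checking that $\W(k,G)$ is contained in that product. This is done by classifying the elements $\tau$ of prime order $\ell$ according to whether $\pi(\tau)\in\G$ has order $\ell$ or is trivial: in the first case the normalizer of $\tau$ equals its centralizer because $\G$ is abelian, forcing $E_{k,G,\tau}=k(\zeta_\ell)$, and in the second case $\tau\in H$ and the corresponding factor already appears in the lower bound. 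If you want to salvage your approach you would have to develop a fibered-product analogue of \cite[Theorem 3.5]{CaputoCobbe}; the semidirect-product route is substantially shorter.
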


\begin{proof}[Proof]
As we have said above the inclusion
\[\rt(k,G)\subseteq\W(k,G)\]
is true in general and is proved in \cite[Theorem 2.10]{CaputoCobbe}. To show the opposite one we will rely on the notation and the main results of \cite{CaputoCobbe}.

We note that $G$ can be written as a semidirect product of the form $H\rtimes \G$, where $H=\F_{p^a}^+\times\F_{p^b}^+$ and $\G=C_p\times C_q\times C_r$; let $\pi:G\to\G$ be the usual projection. Hence, by \cite[Theorem 3.5]{CaputoCobbe} and \cite[Proposition 4.3]{CaputoCobbe} (applied to $\G$), we obtain
\[\rt(k,G)\supseteq\W(k,\G)^{\#H}\prod_{\ell |\#H}\prod_{\tau\in H\{\ell\}^*}W(k,E_{k,G,\tau})^{((\ell-1)/2)(\#G/o(\tau))},\]
So it suffices to show that
\begin{equation}\W(k,G)\subseteq\W(k,\G)^{\#H}\prod_{\ell |\#H}\prod_{\tau\in H\{\ell\}^*}W(k,E_{k,G,\tau})^{((\ell-1)/2)(\#G/o(\tau))}.\label{tobeproved}\end{equation}

For any prime number $\ell$ dividing $\#G$, the $\ell$-Sylow subgroups of $G$ have exponent $\ell$, i.e. for all $\tau\in G\{\ell\}^*$, the order of $\tau$ is exactly $\ell$.

So let $\tau\in G$ be of order $\ell$. Then we have two possibilities:
\begin{enumerate}
\item[(a)] $\pi(\tau)$ is of order $\ell$. Then for any element $\sigma$ of the normalizer of $\tau$, we have $\sigma\tau\sigma^{-1}=\tau^i$ for some $i$. Hence also $\pi(\sigma)\pi(\tau)\pi(\sigma)^{-1}=\pi(\tau)^i$ and, since $\G$ is abelian, we can conclude that $i=1$. Therefore the normalizer of $\tau$ is equal to its centralizer and so from the definition of $E_{k,G,\tau}$ given in \cite{CaputoCobbe} it is clear that $E_{k,G,\tau}=k(\zeta_\ell)$. Therefore we easily obtain
\[W(k,E_{k,G,\tau})^{((\ell-1)/2)(\#G/\ell)}\subseteq\W(k,\G)^{\#H}.\]
\item[(b)] $\pi(\tau)=1$. In this case $\tau\in H$ and we clearly have
\[W(k,E_{k,G,\tau})^{((\ell-1)/2)(\#G/\ell)}=W(k,E_{k,G,\tau})^{((\ell-1)/2)(\#G/o(\tau))}.\]
\end{enumerate}
So in any case we have shown that $W(k,E_{k,G,\tau})^{((\ell-1)/2)(\#G/\ell)}$ is contained in the subgroup on the right-hand side of the inclusion (\ref{tobeproved}), which is therefore proved, recalling the definition of $\W(k,G)$.
\end{proof}
In particular this proves that $\rt(k,G)$ is a group. It is also straightforward to verify that $G$ is very good, according to the definition given in \cite{CaputoCobbe}.

\end{section}

\renewcommand\refname{References}

\end{document}